\documentclass[smallextended]{svjour3}
\topmargin=0cm \oddsidemargin=0mm \textwidth=14.7cm \textheight=22cm
\parindent=0cm
\parskip=5mm
\usepackage{amsfonts}
\usepackage{latexsym}
\usepackage{amsmath}
\usepackage{amssymb}
\usepackage{amscd}
\usepackage{epsfig}
\usepackage{multirow}
\usepackage{graphicx}
\addtocounter{MaxMatrixCols}{4}
\renewenvironment{proof}{\par {\sc {\bf Proof.}\hskip 5pt}}{\hfill \qed \par}
\newcommand{\undertilde}[1]{\ensuremath{\mathord{\vtop{\ialign{##\crcr
   $\hfil\displaystyle{#1}\hfil$\crcr\noalign{\kern1.5pt\nointerlineskip}
   $\hfil\tilde{}\hfil$\crcr\noalign{\kern1.5pt}}}}}}
\begin{document}

\title{Directed in-out graphs of optimal size}
\author{D.~Glynn, M.~Haythorpe, and A.~Moeini}

\institute{David Glynn
\at Flinders University\\
\email{david.glynn@flinders.edu.au} \and Michael Haythorpe (Corresponding author)
\at Flinders University, Ph: +61 8 8201 2834, Fax: +61 8 8201 2904\\
\email{michael.haythorpe@flinders.edu.au} \and Asghar Moeini
\at University of Melbourne\\
\email{asghar.moeini@unimelb.edu.au}} \maketitle
{\abstract
We discuss the recently introduced concept of {\em k-in-out graphs}, and provide a construction for $k$-in-out graphs for any positive integer $k$. We derive a lower bound for the number of vertices of a $k$-in-out graph for any positive integer $k$, and demonstrate that our construction meets this bound in all cases. For even $k$, we also prove our construction is optimal with respect to the number of edges, and results in a planar graph. Among the possible uses of in-out graphs, they can convert the generalized traveling salesman problem to the asymmetric traveling salesman problem, avoiding the \lq\lq big M" issue present in most other conversions. We give constraints satisfied by all in-out graphs to assist cutting-plane algorithms in solving instances of traveling salesman problem which contain in-out graphs.


\keywords{Hamiltonian cycles, subgraphs, in-out property, generalized TSP}}

\section{Introduction}\label{sec-Introduction}
Consider a simple, connected, directed graph $G$ of order $n$. The {\em Hamiltonian cycle problem} (HCP) is: determine if there exists at least one simple cycle of length $n$ in the graph. Such simple cycles of length $n$ are called {\em Hamiltonian cycles} (HC) and graphs containing at least one HC are called {\em Hamiltonian}. Similarly, a simple path of length $n$ is called a {\em Hamiltonian path}.

We consider a family of graphs which possess a property called {\em in-out}, recently defined in Haythorpe and Johnson \cite{bellringing}. Interchangably, they may be referred to as {\em in-out graphs} or {\em in-out subgraphs}, with the latter name used because many applications occur when they are included as part of a larger graph. In-out graphs are defined as follows.

\begin{definition} Consider a graph $S$ and suppose that, for some positive integer $k$, there are $k$ vertices in $S$ that are labelled $i_1, \hdots, i_k$ and called the $k$ {\em incoming vertices}, while $k$ vertices (possibly overlapping with the set of $k$ incoming vertices) in $S$ are labelled $o_1, \hdots, o_k$ and called the $k$ {\em outgoing vertices}. Then $S$ is called a {\em $k$-in-out graph} if it satisfies the following two conditions.

\begin{enumerate}\item For all $j,m = 1, \hdots, k$, there is a Hamiltonian path in $S$ between vertices $i_j$ and $o_m$ if and only if $j = m$.
\item There is no union of more than one disjoint paths in $S$, each starting at an incoming vertex and finishing at an outgoing vertex, such that all vertices in $S$ are visited.\end{enumerate}

We refer to the first condition as the {\em paired vertices condition}, and the second condition as the {\em single visit condition}.\label{def-ios}\end{definition}

As mentioned previously, many applications occur when an in-out graph $S$ is included as part of a larger graph. A primary such application occurs in the context of HCP. Consider any graph $G$ satisfying the following conditions.

\begin{enumerate}\item $S$ is an induced subgraph of $G$.
\item Any edges going from $G \setminus S$ to $S$ (which we call {\em incoming edges}) are incident with one of the incoming vertices of $S$.
\item Any edges going from $S$ to $G \setminus S$ (which we call {\em outgoing edges}) are incident with one of the outgoing vertices of $S$.\end{enumerate}

Then, any Hamiltonian cycle $H$ in $G$ will contain precisely one incoming edge and one outgoing edge, such that if the incoming edge is incident with incoming vertex $i_j$, then the outgoing edge will be incident with outgoing vertex $o_j$. Thus, from the perspective of HCP, the subgraph $S$ functions the same way as a vertex within a larger graph $G$, in that it must be visited precisely once due to the single visit condition. However, although multiple edges may enter and exit $S$, once a particular incoming edge is chosen, the set of possible outgoing edges is reduced to only those incident with the corresponding outgoing vertex, due to the paired vertices condition. Therefore, in-out subgraphs can be used to convert certain constrained forms of HCP into standard HCP, such as can be solved by the excellent solvers due to Baniasadi et al. \cite{slh}, Chalaturnyk \cite{chalaturnyk} or Helsgaun \cite{LKH}.

For some problems, it is often necessary to replace most or all of the vertices in a graph with in-out subgraphs, and hence it is desirable to use in-out subgraphs of small size whenever possible, to prevent the order of the resulting graph from growing too large. In the following, we prove that a $k$-in-out graph must contain at least $2k-1$ vertices. We also prove that any $k$-in-out graph containing $2k-1$ vertices must have at least $4k-4$ edges. We then provide a construction which contains $2k-1$ vertices for all $k \geq 4$. For even $k$ it contains $4k-4$ edges, while for odd $k$ it contains $4k-3$ edges. The cases when $k = 1,2,3$ are handled separately, and contain 1, 3, and 6 vertices respectively; we show that $k = 3$ is the only size of in-out graph where it is impossible to meet the $2k-1$ lower bound on the number of vertices. We also show that our construction provides a planar graph if $k \neq 1 \mod 4$; this may be useful in the case that the original instance was planar and that it is desirable to retain the planarity. We give an example of the usage of in-out graphs by converting the {\em generalized traveling salesman problem} (GTSP) to the standard traveling salesman problem. Most conversions of GTSP to TSP described in literature to date have required the introduction of large weights which are required to grow with the order of the instance, which in turn often leads to numerical problems. The construction we give using in-out subgraphs avoids this issue altogether. Finally, we provide a set of constraints that can be used in cutting-plane approaches whenever an in-out subgraph is used in the context of HCP or TSP.


\section{Bounds}

In this section, we consider bounds on the number of vertices and edges required to induce the in-out property. First, we consider the minimum number of vertices required for an in-out graph.

\begin{proposition}Every $k$-in-out graph $S$ has number of vertices (order) at least $2k-1$.\label{prop-order}\end{proposition}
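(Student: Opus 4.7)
The plan is to exploit the Hamiltonian path from $i_1$ to $o_1$ guaranteed by the paired vertices condition, and use the single visit condition to rule out most ways that incoming and outgoing labels can sit close to each other along that path. List its vertices in order as $u_1 = i_1,\, u_2,\, \ldots,\, u_n = o_1$, and let $I = \{\,p : u_p \in \{i_1,\ldots,i_k\}\,\}$ and $O = \{\,p : u_p \in \{o_1,\ldots,o_k\}\,\}$. Because the incoming (resp.\ outgoing) vertices are distinct, $|I| = |O| = k$, and by construction $1 \in I$ and $n \in O$.

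The central observation is that if some position $p \in \{1,\ldots,n-1\}$ satisfies $p \in O$ and $p+1 \in I$, then slicing the Hamiltonian path at that position yields two vertex-disjoint subpaths $u_1,\ldots,u_p$ and $u_{p+1},\ldots,u_n$ whose union covers all vertices of $S$. The first subpath starts at $i_1$ and ends at some $o_m$; the second starts at some $i_j$ and ends at $o_1$. This is a decomposition into two incoming-to-outgoing paths that violates the single visit condition. Consequently, for every $p \in O \setminus \{n\}$, we must have $p+1 \notin I$. Equivalently, the shifted set $(O \setminus \{n\}) + 1$ and the set $I \setminus \{1\}$ are disjoint subsets of $\{2,\ldots,n\}$.

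To finish, I would count. Each of these two subsets has cardinality $k-1$, and together they lie in a set of size $n-1$, forcing $2(k-1) \leq n-1$, that is, $n \geq 2k-1$. The only delicate point is that a split may isolate a single vertex when $p = 1$ (if $i_1$ happens to coincide with some $o_m$, $m \ne 1$) or when $p = n-1$ (if $o_1$ coincides with some $i_j$, $j \ne 1$); but a single vertex that carries both labels is a legitimate length-zero incoming-to-outgoing path, so the violation of the single visit condition still goes through. This is the step I expect to require the most care in the write-up, since it is the place where the allowed overlap between incoming and outgoing vertex labels interacts with the argument.
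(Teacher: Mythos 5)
Your proof is correct and takes essentially the same approach as the paper's: both fix the Hamiltonian path from $i_1$ to $o_1$, observe that an outgoing vertex can never immediately precede an incoming vertex along it (else splitting the path violates the single visit condition), and then count. Your explicit handling of the degenerate single-vertex splits at the endpoints is a point of care the paper's write-up glosses over, but the argument is the same.
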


\begin{proof}Consider an in-out graph $S$ with order $v$. Without loss of generality, it is possible to label the vertices so that incoming vertex $i_1 = 1$, outgoing vertex $o_1 = v$, and there is a path between them traversing vertices $1 \rightarrow 2 \rightarrow 3 \rightarrow \cdots \rightarrow v$.

Now, suppose there is some incoming vertex $i$ and some outgoing vertex $o$ such that, with this labelling, $o$ directly precedes $i$. Such a situation is illustrated in Figure \ref{fig1}. It clear that, in this scenario, it is possible to find a union of two disjoint paths in $S$, the first starting at $i_1$ and finishing at $o$, and the second starting at $i$ and finishing at $o_1=v$, that covers all the vertices of $S$. However, this is impossible because, from Definition \ref{def-ios}, $S$ must satisfy the single visit condition. Hence, it can never be the case that an outgoing vertex is directly followed by an incoming vertex in our chosen labelling.

\begin{figure}[h]\begin{center}\includegraphics[scale=0.425]{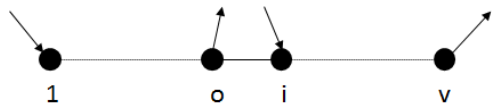}
\caption{The scenario where an outgoing vertex directly precedes an incoming vertex on the Hamiltonian path between a different pair of incoming and outgoing vertices. In such a case the single visit condition is violated, and so this can never be the case for an in-out graph.}\label{fig1}\end{center}\end{figure}

There are $k$ outgoing vertices in $S$. One of them ($o_1$) is labelled $v$, so no vertex follows it in our labelling. However, for each of the other $k-1$ outgoing vertices, there is a vertex which follows it which, as argued above, cannot be an incoming vertex. Hence, $S$ contains $k-1$ vertices which cannot be incoming vertices, plus $k$ incoming vertices, and so must contain at least $2k-1$ vertices.\end{proof}

Suppose that a $k$-in-out graph has $2k-1$ vertices. We next consider the minimum number of edges this graph must contain.

\begin{theorem}Any $k$-in-out graph of order $2k-1$ contains at least $4k-4$ directed edges.\label{thm-edges}\end{theorem}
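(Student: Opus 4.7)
The plan is to build on the labelling of Proposition~\ref{prop-order}: after relabelling, the vertices are $1,2,\ldots,2k-1$ with $i_1=1$, $o_1=2k-1$, and the $2k-2$ edges $E_1=\{(t,t+1):1\le t\le 2k-2\}$ form the Hamiltonian path $P_1$ from $i_1$ to $o_1$. It then remains to show that the other $k-1$ Hamiltonian paths $P_2,\ldots,P_k$ jointly contribute at least $2(k-1)$ further edges, giving $|E|\ge(2k-2)+2(k-1)=4k-4$.

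For each $j\ge2$, I would decompose $P_j$ into maximal ascending runs in the main labelling, i.e., maximal subpaths whose successive vertices differ by $+1$ (and which therefore traverse only $E_1$-edges). If $m_j$ is the number of runs, then $P_j$ uses exactly $m_j-1$ edges outside $E_1$. The first step is to show $m_j\ge3$ for every $j\ge2$: one run would force $(i_j,o_j)=(1,2k-1)$, which is pair $1$; two runs would partition $\{1,\ldots,2k-1\}$ into intervals $[1,x]$ and $[x+1,2k-1]$, and the only admissible traversal with $(i_j,o_j)\ne(1,2k-1)$ has $i_j=x+1$ and $o_j=x$, so $o_j=i_j-1$, placing the outgoing vertex $o_j$ directly before the incoming vertex $i_j$ in the main labelling and contradicting the constraint from Proposition~\ref{prop-order}'s proof. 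Hence $|E_j\setminus E_1|\ge2$ for every $j\ge2$.

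Next, I would associate with each $j\ge2$ two specific non-$E_1$ edges: $\rho_j$, the out-edge of $i_j-1$ used in $P_j$, and $\sigma_j$, the in-edge of $o_j+1$ used in $P_j$. These are indeed non-$E_1$, because the $E_1$-edge $(i_j-1,i_j)$ cannot occur in $P_j$ (since $i_j$ starts $P_j$) and $(o_j,o_j+1)$ cannot occur (since $o_j$ ends $P_j$). Across $j\ge2$ the $\rho_j$'s have pairwise distinct tails $i_j-1$ and so form $k-1$ distinct edges, while the $\sigma_j$'s have pairwise distinct heads $o_j+1$ and likewise form $k-1$ distinct edges. If the families $\{\rho_j\}$ and $\{\sigma_j\}$ were always disjoint, one would immediately get $|\bigcup_{j\ge2}(E_j\setminus E_1)|\ge2(k-1)$, completing the proof.

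The main obstacle is the possible overlap between the two families: an edge $(i_j-1,o_l+1)$ could serve simultaneously as $\rho_j$ in $P_j$ and $\sigma_l$ in $P_l$. To close this gap I would exploit the bound $|E_j\setminus E_1|=m_j-1\ge2$. In the self-overlap case $\rho_j=\sigma_j$, one checks that the unique three-run configuration yields $\rho_j=(i_j-1,1)\ne(2k-1,o_j+1)=\sigma_j$, so $\rho_j=\sigma_j$ forces $m_j\ge4$ and $|E_j\setminus E_1|\ge3$, providing an extra non-$E_1$ edge; for overlaps between distinct $j$ and $l$, a careful case analysis based on the positions of the ``corner'' vertices $1$ and $2k-1$ within the run decompositions of $P_j$ and $P_l$ should extract a further non-$E_1$ edge not already counted among the $\rho$'s or $\sigma$'s. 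Turning this compensation into a fully rigorous accounting of edges is the hardest step of the proof.
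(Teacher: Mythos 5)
Your setup is sound as far as it goes: the run decomposition, the fact that $m_j\ge 3$ (via the ``no outgoing vertex immediately precedes an incoming vertex'' observation from Proposition~\ref{prop-order}), and the identification of the non-$E_1$ edges $\rho_j$ and $\sigma_j$ with pairwise distinct tails and heads respectively are all correct. But the proof has a genuine gap exactly where you flag it: without controlling the overlap between the families $\{\rho_j\}$ and $\{\sigma_j\}$, you are only guaranteed $k-1$ distinct non-$E_1$ edges (in the worst case every $\rho_j$ coincides with some $\sigma_l$), which yields $|E|\ge(2k-2)+(k-1)=3k-3$, short of the claimed $4k-4$. The compensation argument --- recovering one extra edge for each coincidence, and showing these extra edges are themselves distinct from one another and from the already-counted $\rho$'s and $\sigma$'s --- is the entire content of the theorem in your framing, and it is not carried out; ``a careful case analysis should extract a further non-$E_1$ edge'' is a hope, not a proof, and the bookkeeping for cross-overlaps $\rho_j=\sigma_l$ with $j\ne l$ (which can chain: $\rho_j=\sigma_l$, $\rho_l=\sigma_m$, etc.) looks genuinely delicate.

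The paper avoids this double-counting problem by arguing about degrees rather than about edges of individual paths. It partitions the vertices into four classes ($I$: incoming only, $O$: outgoing only, $B$: both, $N$: neither), shows that in any path-labelling the classes force a segment structure which restricts the possible head-class of an edge as a function of its tail-class, and then counts two families of edges that are disjoint \emph{by construction}: edges leaving vertices of $I\cup N$ (each such vertex has out-degree at least $2$) and edges entering vertices of $O\cup N$ from $O\cup B$. Because the disjointness comes from the tail-class restriction rather than from an edge-by-edge comparison, no overlap analysis is needed. If you want to rescue your approach, the missing ingredient is some structural statement of that kind --- e.g.\ a reason why an edge serving as $\rho_j$ (leaving the predecessor of an incoming vertex) can never also serve as $\sigma_l$ (entering the successor of an outgoing vertex) --- but as written that is precisely what you have not established.
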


\begin{proof}Consider a $k$-in-out graph $S$ of order $2k-1$, and suppose that $S$ contains a minimal number of edges. The set of vertices $V$ in $S$ can be partitioned into four disjoint subsets as follows. Denote by $I$ the set of incoming vertices which are not also outgoing vertices. Denote by $O$ the set of outgoing vertices which are not also incoming vertices. Denote by $B$ the set of vertices which are both incoming and outgoing vertices. Denote by $N$ the set of vertices which are neither incoming nor outgoing vertices. We denote their cardinalities as $a = |I| = |O|$, $b = |B|$ and $c = |N|$. Then, since $a+b = k$ and $2a + b +c = 2k-1$, it can be seen that $c = b-1$. Hence the number of vertices in $N$ is one fewer than the number of vertices in $B$.

The proof will be organised as follows. First, we will consider the case where $a = 0$ and show that this case is trivial, and so we will then restrict attention to the case where $a \geq 1$. First we will show that no edges can go from vertices in $O$ to vertices in $I$. Then we will show that, for any labelling of the vertices such that $1 \rightarrow 2 \rightarrow \cdots \rightarrow 2k-1$ is a Hamiltonian path in $S$, the vertices of the graph are divided into segments each separated by a single vertex in $N$, and each containing exactly one vertex from $B$. We will then prove edges emanating from vertices in $I$ and $N$ can only go to vertices in $I$ or $B$, and similarly, edges emanating from vertices in $B$ and $O$ can only go to vertices in $O$ or $N$. Finally, we will use the paired edge condition to determine the minimal number of edges required.

Consider first the case where $a = 0$, that is, all incoming vertices are also outgoing vertices. We note that this is the case for the construction given later in this manuscript. Then $b = k$ and $c = k-1$. Suppose that $S$ is labelled in order of one of the Hamiltonian paths. We will refer to such a labelling as a {\em path-labelling}. Consider any two consecutive vertices $x$ and $y$. It is clear that they cannot both be in $B$, otherwise it would be possible to start at the initial vertex, travel along the path to $x$ and depart, then re-enter at $y$ and complete the rest of the path, violating the single visit condition. Hence, by the pigeonhole principle, the vertices must be ordered starting with a vertex from $B$, then a vertex from $N$, then a vertex from $B$, and so on. Since this must be true for any path-labelling in $S$, it is obvious that any edges between two vertices in $B$ are unnecessary, and similarly, any edges between two vertices of $N$ are unnecessary. Since $S$ is optimal, none of these unnecessary edges exist and hence $S$ is bipartite. Now consider any consecutive vertices $x \in N$ and $y \in B$. Since $y \in B$, it is an incoming vertex, and hence a Hamiltonian path must exist which terminates in its corresponding outgoing vertex. Since $x \in N$, it cannot be the corresponding outgoing vertex. Hence, the Hamiltonian path must eventually reach $x$ and then leave it. Since it cannot return to $y$, there must be at least one more edge emanating from $x$ besides $(x,y)$. Since this must be true for any $x \in N$, we conclude that all vertices in $N$ have out-degree at least 2. Similarly, consider any two consecutive vertices $x \in B$ and $y \in N$. Since $x$ is an outgoing vertex, there must be a Hamiltonian path which ends at $x$, and which cannot have started at $y$. Hence, there must be at least one more edge going into $y$ besides $(x,y)$. Since this is true for any $y \in N$, we conclude that all vertices in $N$ have in-degree at least 2. Since $S$ is bipartite, the set of edges departing vertices in $N$ is disjoint with the set of edges entering vertices in $N$. Hence, at least $4c = 4k-4$ edges are required.

Next, consider the case where $a \geq 1$. It is clear that $b = c + 1 \geq 1$. Suppose that $S$ is labelled with a path-labelling starting from a vertex in $I$. Consider any two consecutive vertices $x$ and $y$. Using an identical argument to the previous paragraph, it is clear that it cannot be the case that $x \in O \cup B$ and $y \in I \cup B$ or else the single visit condition is violated. Then consider any vertex $x \in B$. Clearly the vertex which succeeds it must be from $O \cup N$. If that vertex is from $O$, then the next vertex must also be from $O \cup N$, and so forth. Hence, for any path-labelling of $S$ according to a Hamiltonian path, it must be the case that any two vertices from $B$ which appear consecutively in the path-labelling have at least one vertex from $N$ in between them. Since $b = c + 1$, the pigeonhole principle implies that there will be precisely one vertex from $N$ between them. So the vertices in $B$ and $N$ come in alternating order for any path-labelling. We will say that, given a path-labelling of $S$, the vertices of the graph can be divided into {\em segments} plus the vertices from $N$. That is, the first segment contains the vertices labelled $1, 2, \hdots, j-1$ where $j$ is the first vertex in $N$, then the second segment contains vertices $j+1, j+2, \hdots, m-1$ where $m$ is the second vertex in $N$, and so on. Then each segment contains precisely one vertex from $B$. From the above arguments it is clear that segments must start with some number of (possibly zero) vertices from $I$, then a single vertex from $B$, followed by some number of (possibly zero) vertices from $O$. Since this must be the case for all path-labellings, and $S$ has a minimal number of edges, we can conclude the following:

\begin{itemize}\item Edges which emanate from vertices in $I$ can only go to vertices in $I \cup B$.
\item Edges which emanate from vertices in $O$ can only go to vertices in $O \cup N$.
\item Edges which emanate from vertices in $B$ can only go to vertices in $O \cup N$.
\item Edges which emanate from vertices in $N$ can only go to vertices in $I \cup B$.\end{itemize}

Now, consider any vertex $y \in I \cup B$ in the path-labelling of $S$, except the initial one. It is clear that the vertex $x$ that precedes it will be from $I \cup N$. Then, there must be a Hamiltonian path that begins at $y$ and travels through $x$ at some point, so there must be another edge emanating from $x$ besides $(x,y)$. It can be easily seen that if all vertices $y \in I \cup B$ are considered in this way, then the union of preceding vertices is equal to $I \cup N$. Hence every vertex in $I \cup N$ must have out-degree at least 2 and so these edges contribute at least $2a + 2c$ edges to $S$.

Next, consider any vertex $x \in O \cup B$ in the path-labelling of $S$, except the final one. It is clear that the vertex $y$ that succeeds it will be from $O \cup N$. Then, there must be a Hamiltonian path that ends at $x$, after having travelled through $y$ previously. Hence, there must be another edge going to $y$ in addition to $(x,y)$. Since we considered $a + b - 1 = k-1$ vertices, there are at least $2k - 2$ edges here, and from above, we know that each of these edges must emanate from vertices in $O \cup B$, so they are disjoint with the set of edges considered in the previous paragraph. Hence, there must be at least $2a + 2c + 2k - 2 = 4k - 4$ edges.\end{proof}

\section{Construction}

In this section, we give a construction that produces $k$-in-out graphs of minimal order for any $k \geq 4$. The cases when $k = 1,2,3$ are considered individually. To begin with, we consider a class of bipartite graphs and show that the single visit condition is satisfied by them.

\begin{lemma}Suppose that a graph $G$ is a bipartite graph, that is, its vertex set can be partitioned into $\{V_1, V_2\}$ such that all edges in $G$ are incident with an element from both $V_1$ and $V_2$. Furthermore, suppose that $|V_1| = |V_2|+1$, and that every incoming vertex and every outgoing vertex is contained in $V_1$. Then $G$ satisfies the single visit condition.\label{lem-bip}\end{lemma}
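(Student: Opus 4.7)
The plan is to argue by contradiction using a parity count enabled by the bipartite structure. Suppose the single visit condition fails; then there exist $m \geq 2$ pairwise vertex-disjoint directed paths $P_1,\dots,P_m$ whose union covers $V(G)$, with each $P_j$ beginning at an incoming vertex and ending at an outgoing vertex. By hypothesis, every such endpoint lies in $V_1$, so both endpoints of each $P_j$ are in $V_1$.

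The key observation I would exploit is that in a bipartite graph the two parts are traversed alternately along any directed path, since every edge crosses between $V_1$ and $V_2$. Consequently, a path that both starts and ends in $V_1$ has an odd number of vertices and contains exactly one more vertex of $V_1$ than of $V_2$. Applying this to each $P_j$ and summing over $j$, using the fact that the paths together partition $V(G)$, I would obtain
\[
|V_1| - |V_2| \;=\; \sum_{j=1}^{m}\bigl(|V(P_j)\cap V_1|-|V(P_j)\cap V_2|\bigr) \;=\; m.
\]
Combined with the hypothesis $|V_1|=|V_2|+1$, this forces $m=1$, contradicting $m\geq 2$.

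There is essentially no technical obstacle: once the alternation observation is made, the arithmetic is immediate. The only points I would be careful to record are (i) that the hypothesis places both endpoints of every violating path in $V_1$, which is explicit in the statement, and (ii) that ``disjoint'' together with ``all vertices visited'' forces the paths to form a partition of $V(G)$, so the per-path counts add correctly.
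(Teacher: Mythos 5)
Your proposal is correct and uses the same parity argument as the paper: each path starting and ending in $V_1$ contains exactly one more $V_1$-vertex than $V_2$-vertex, so summing over the disjoint covering paths gives $|V_1|-|V_2|=m$ and hence $m=1$. The only cosmetic difference is that you phrase it as a contradiction with $m\geq 2$ while the paper directly concludes $m=1$; the substance is identical.
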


\begin{proof}Since all incoming vertices are in $V_1$, it is clear that whenever $G$ is entered, a vertex in $V_1$ is visited. From here, because $G$ is bipartite, a vertex in $V_2$ is visited next, then a vertex in $V_1$, and so on until $S$ is departed. This departure must also occur at a vertex in $V_1$ since there are no outgoing vertices in $V_2$. It is clear that during this visit, precisely one more vertex from $V_1$ is visited than from $V_2$. Hence, if $G$ is visited $m$ times, there must be $m$ more vertices of $V_1$ visited than those of $V_2$. However, since $|V_1| = |V_2|+1$, it follows that $G$ must be visited precisely once, and so the single visit condition is satisfied.\end{proof}

Hence, from Proposition \ref{prop-order} and Lemma \ref{lem-bip} it is clear that if a bipartite graph of order $2k-1$ satisfying the conditions of Lemma \ref{lem-bip} also satisfies the paired vertices condition, then it is an optimal $k$-in-out graph with respect to the number of vertices. Call such a graph $\mathcal{S}_k$. Recall that we can verify whether or not $\mathcal{S}_k$ satisfies the paired vertices condition by finding all Hamiltonian paths between pairs of incoming and outgoing vertices in $\mathcal{S}_k$.

For small $k$ we can find such graphs explicitly by exhaustive search. For $k = 1$, $\mathcal{S}_1$ contains a single vertex corresponding to both $i_1$ and $o_1$. For $k = 2$, $\mathcal{S}_2$ contains 3 vertices and the directed edges $(1,2), (2,1), (2,3), (3,2)$ with $i_1 = 1$, $i_2 = 3$, $o_1 = 3$, $o_2 = 1$. It can be checked the $\mathcal{S}_1$ and $\mathcal{S}_2$ both meet the conditions of Lemma \ref{lem-bip} and satisfy the paired vertices condition, and in both cases they contain $2k-1$ vertices and $4k-4$ directed edges. We will leave the case where $k = 3$ for the end of this section.

We now provide a procedure for constructing $k$-in-out graphs $\mathcal{S}_k$ of order $2k-1$ for $k \geq 4$.

{\underline{\bf Construction for $\mathcal{S}_k$}}

For even $k \geq 4$, $\mathcal{S}_k$ contains the following edges:

\begin{itemize}\item Undirected edges $(4i-2,4i-1)$, $(4i-1,4i)$ and $(4i, 4i+1)$ for $i = 1, \hdots, \frac{k-2}{2}$.
\item Directed edges $(4i-2,4i+5)$ and $(4i+1,4i+2)$ for $i = 1, \hdots, \frac{k-4}{2}$.
\item Directed edges $(1,2)$, $(2k-6,2k-1)$, $(2k-3,2k-2)$, $(2k-2,1)$, $(2k-2,5)$, $(2k-1,2k-2)$.
\end{itemize}

For odd $k \geq 5$, $\mathcal{S}_k$ contains the following edges:

\begin{itemize}\item Undirected edges $(4i-2,4i-1)$, $(4i-1,4i)$ and $(4i, 4i+1)$ for $i = 1, \hdots, \frac{k-1}{2}$.
\item Directed edges $(4i-2,4i+5)$ and $(4i+1,4i+2)$ for $i = 1, \hdots, \frac{k-3}{2}$.
\item Directed edges $(1,2)$, $(2k-4,1)$, $(2k-2,5)$.
\end{itemize}


In both cases, we define the incoming vertices to be $i_j = 2j-1$ for $j = 1, \hdots, k$. The outgoing vertices require a bit more care to define. In both cases, we can define the majority of the outgoing vertices as $o_{2j} = 4j+3$ and $o_{2j+1} = 4j-3$ for $j = 1, \hdots, \lfloor\frac{k-3}{2}\rfloor$. Then, for the case where $k$ is even, the remaining outgoing vertices yet to be defined are $o_1 = 3$, $o_{k-2} = 2k-1$, $o_{k-1} = 2k-7$, $o_k = 2k-3$. For the case where $k$ is odd, the remaining outgoing vertices yet to be defined are $o_1 = 2k-1$, $o_{k-1} = 3$ and $o_k = 2k-5$. An example of each of the two constructions is displayed in Figure \ref{fig2}.

\begin{figure}[h]\begin{center}\includegraphics[scale=0.333]{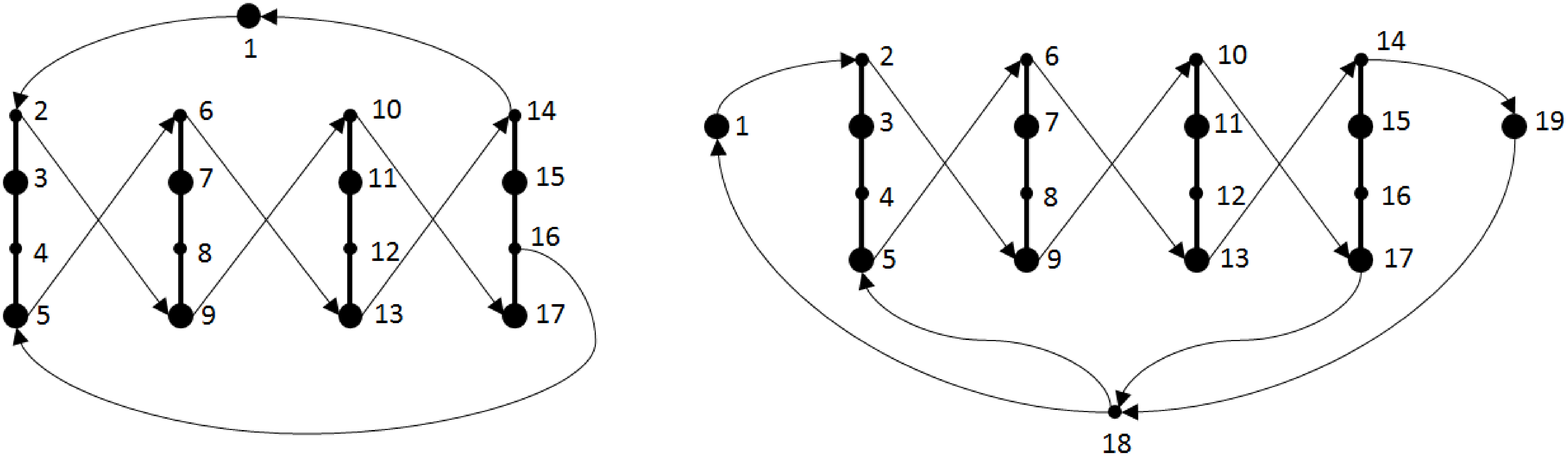}
\caption{In-out graphs constructed from the above construction for $k = 9$ and $k = 10$. The solid thick edges are undirected, and the large vertices are the incoming/outgoing vertices. For larger $k$, the middle pattern is simply repeated as many times as necessary.}\label{fig2}\end{center}\end{figure}

It can be easily checked that $\mathcal{S}_k$ is a bipartite graph satisfying the conditions of Lemma 1. Hence, all that remains is to check the Hamiltonian paths between pairs of incoming and outgoing vertices. In Theorem \ref{thm-paired} we will prove that there are no such Hamiltonian paths between incoming vertices $i_j$ and outgoing vertices $o_m$ for $j \neq m$, and in Proposition \ref{prop-paths} we will show that there is a Hamiltonian path between $i_j$ and $o_j$ for all $j$.

\begin{theorem}
For $k \geq 4$, there are no Hamiltonian paths in $\mathcal{S}_k$ starting at incoming vertex $i_j$ and finishing at any outgoing vertex $o_m$ for $j \neq m$.
\label{thm-paired}\end{theorem}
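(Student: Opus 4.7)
My plan is to exploit the bipartite structure guaranteed by Lemma \ref{lem-bip} together with a local analysis at each even vertex. First observe that $\mathcal{S}_k$ is bipartite with $V_1$ the set of odd-labelled vertices (which contains every incoming and every outgoing vertex) and $V_2$ the set of even-labelled vertices, with $|V_1|=k=|V_2|+1$. Hence every Hamiltonian path alternates between $V_1$ and $V_2$, begins and ends in $V_1$, and uses exactly one in-arc and one out-arc at each even vertex. So specifying a Hamiltonian path is equivalent to selecting, at each $v\in V_2$, one in-arc and one out-arc incident to $v$, subject to the global constraint that the resulting arcs concatenate into a single path.

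The next step is to tabulate, directly from the construction, the small lists of in-arcs and out-arcs at every even vertex. For the ``interior'' even vertices of each $4$-vertex block $\{4i-2,4i-1,4i,4i+1\}$ the lists typically contain two elements each, contributed by the undirected edges inside the block together with one of the long directed edges $(4i-2,4i+5)$ or $(4i+1,4i+2)$. The ``boundary'' even vertices $2$ and $2k-2$ (and their immediate neighbours) have different lists arising from the extra directed edges $(1,2)$, $(2k-6,2k-1)$, $(2k-3,2k-2)$, $(2k-2,1)$, $(2k-2,5)$, $(2k-1,2k-2)$ in the even-$k$ case and $(1,2)$, $(2k-4,1)$, $(2k-2,5)$ in the odd-$k$ case, and these must be handled individually.

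Using these tables, I would establish an interior propagation lemma: on any Hamiltonian path, the choice of in-arc and out-arc used at the even vertices of block $i$ forces the corresponding choice at block $i\pm 1$. In effect the path traverses the chain of blocks in one of two ``directions,'' each direction sending the block-$i$ incoming vertex to its prescribed outgoing partner, so that the even-indexed pairings $i_{2j}\mapsto o_{2j}=4j+3$ and the odd-indexed pairings $i_{2j+1}\mapsto o_{2j+1}=4j-3$ emerge as the only possibilities for a Hamiltonian path starting inside the periodic interior. With this in hand, the theorem reduces to checking that starting from each $i_j$ the propagation, combined with the boundary edges, terminates precisely at $o_j$ and at no other outgoing vertex, and in particular that no attempt to deviate at the boundary yields a path to some $o_m$, $m\neq j$.

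The main obstacle is the bookkeeping in the boundary cases, not the interior propagation. Once the interior lemma is in place, the remaining work is a finite check: for $j\in\{1,2,k-1,k\}$ (and, in the even-$k$ case, also $j=k-2$) the paths from $i_j$ use at least one of the special edges listed above, so the forcing must be redone by hand, separately for even and odd $k$. I would organise the proof as (i) a short proof of the bipartite/alternation reduction, (ii) the interior propagation lemma proved by inspecting one generic block, and (iii) two parallel boundary case analyses, one for each parity of $k$, each consisting of a constant number of endpoint checks. This localises all the combinatorial fiddliness into the final step and keeps the main line of argument transparent.
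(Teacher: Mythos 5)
Your bipartite set-up is sound and consistent with what the paper proves elsewhere (Lemma \ref{lem-bip}): $\mathcal{S}_k$ is bipartite on odd versus even labels, every Hamiltonian path alternates and must begin and end at odd vertices, and each even vertex contributes exactly one in-arc and one out-arc. The paper's own proof of Theorem \ref{thm-paired} is likewise a forcing argument, but it proceeds by tracing the path from each starting vertex $i_j$, using degree-2 vertices and dead-end detection, rather than your arc-selection tables; the two arguments are close in spirit, and your reduction to arc choices at even vertices is a legitimate reformulation.

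The genuine gap is the interior propagation lemma. As you state it --- the path traverses the chain of blocks in one of two directions, with the choice at block $i$ forcing the corresponding choice at block $i\pm 1$ --- it is false for $\mathcal{S}_k$. The Hamiltonian path from $i_{2j}=4j-1$ to $o_{2j}=4j+3$ visits block $j+1=\{4j+2,4j+3,4j+4,4j+5\}$ in two disconnected pieces: $4j+2$ is reached immediately after the start (via $4j-1\to 4j\to 4j+1\to 4j+2$) and the path then jumps to $4j+9$, while $4j+5\to 4j+4\to 4j+3$ is only traversed at the very end after the path has wrapped around through $2k-1$, $2k-2$ and $1$. So the even vertices of the terminal block do not make a single consistent ``direction'' choice, and a lemma proved by inspecting one generic block cannot distinguish the terminal block from the others. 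Relatedly, the step that eliminates the wrong branch at the starting block is inherently non-local: if the path goes $4j+2\to 4j+3\to 4j+4\to 4j+5$ instead of $4j+2\to 4j+9$, the contradiction is that vertex $4j-2$, back in the starting block, is eventually left with no unvisited out-neighbour ($4j-1$ is the path's origin and $4j+5$ is already used), which is only visible by tracking globally which odd vertices still need an in-arc. To repair the plan you would have to weaken the propagation statement to exclude the (a priori unknown) terminal block and then argue separately about where termination can occur --- at which point you have essentially reconstructed the paper's case-by-case forcing argument. As written, the tables, the propagation proof, and all boundary checks are deferred, so the proposal is a strategy outline rather than a proof.
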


\begin{proof}
We present the full proof for the even case here. The proof for the odd case follows from analogous arguments. We will partition the incoming vertices into several categories, namely:

\begin{itemize}\item Incoming vertex $i_1$
\item Incoming vertices $i_{2j}$ for $j = 1, 2, \hdots, \frac{k-4}{2}$
\item Incoming vertices $i_{2j+1}$ for $j = 1, 2, \hdots, \frac{k-2}{2}$
\item Incoming vertex $i_{k-2}$
\item Incoming vertex $i_k$
\end{itemize}

Consider first incoming vertex $i_1 = 1$. Consider a Hamiltonian path $P$ that begins at this vertex and finishes at an outgoing vertex. After $P$ begins at vertex 1, it must proceed to vertex 2, and then there is a choice to proceed either to vertex 3 or 9. Suppose that $P$ proceeds to 3, then it is forced to further proceed to 4 and 5. However, at some stage in the future, $P$ must reach vertex $2k-2$, at which point the only options are to either proceed to vertex 1, or vertex 5. Neither choice is still valid, and $P$ cannot conclude here since $2k-2$ is not an outgoing vertex. Hence, we conclude that $P$ must not proceed from 2 to 3. However, since vertex 3 is of degree 2, and if $P$ does not proceed from 2 to 3, then $P$ must finish at vertex $3 = o_1$.

Next consider any incoming vertex $i_{2j} = 4j-1$ for $j = 1, 2, \hdots, \frac{k-4}{2}$. Consider a Hamiltonian path $P$ that begins at this vertex and finishes at an outgoing vertex. After $P$ begins at vertex $4j-1$, it must proceed to the degree 2 vertex $4j$, or else when that vertex is visited later there will be nowhere to go and $P$ would finish here, which is a contradiction since $4j$ is not an outgoing vertex. So $P$ will go to $4j$ and then continue on to vertices $4j+1$ and $4j+2$. At this point, there is a choice to either proceed to vertices $4j+3$ or $4j+9$. Suppose that $P$ proceeds to $4j+3$, then it is forced to further proceed to vertices $4j+4$ and $4j+5$. However, at some stage in the future, $P$ must reach vertex $4j-2$, at which point the only options are to either proceed to vertex $4j-1$ or $4j+5$. Neither choice is still valid, and $P$ cannot conclude here since $4k-2$ is not an outgoing vertex. Hence, we conclude that $P$ must not proceed from $4j+2$ to $4j+3$. However, since vertex $4j+3$ is of degree 2, and $P$ does not proceed from $4j+2$ to $4j+3$, then $P$ must finish at vertex $4j+3 = o_{2j}$.

Next consider any incoming vertex $i_{2j+1} = 4j+1$ for $j = 1, 2, \hdots, \frac{k-2}{2}$. Consider a Hamiltonian path $P$ that begins at this vertex and finishes at an outgoing vertex. After $P$ begins at vertex $4j+1$, it must proceed to the degree 2 vertex $4j$, or else when that vertex is visited later there will be nowhere to go and $P$ would finish here, which is a contradiction since $4j$ is not an outgoing vertex. So $P$ will go to $4j$ and then continue on to vertices $4j-1$ and $4j-2$. However, at some stage in the future, vertex $4j-3$ will be visited. If $j = 1$, this is vertex 1 and it can only go to vertex $2$ which has already been visited, and so the path must finish here. If $j > 1$ then there are two cases to consider. Either vertex $4j-3$ is preceded by $4j-4$, or not. In the latter case, then upon arriving at vertex $4j-3$, vertex $4j-4$ is the only remaining destination (since vertex $4j-2$ has already been visited), which is then followed by $4j-5$ and $4j-6$, at which point there is nowhere to go. Since $4j-6$ is not an outgoing vertex, this case must not have occurred. Hence, vertex $4j-3$ is preceded by $4j-4$ and so upon arriving at vertex $4j-3$ there is nowhere left to go and $P$ must finish at vertex $4j-3 = o_{2j+1}$.


Next consider incoming vertex $i_{k-2} = 2k-5$. Consider a Hamiltonian path $P$ that begins at this vertex and finishes at an outgoing vertex. After $P$ begins at vertex $2k-5$, it must proceed to the degree 2 vertex $2k-4$, or else when that vertex is visited later there will be nowhere to go and $P$ would finish here, which is a contradiction since $2k-4$ is not an outgoing vertex. So $P$ will go to $2k-4$ and then continue on to vertices $2k-3$ and $2k-2$. However, at some stage in the future, vertex $2k-1$ will be visited. This vertex can only go to vertex $2k-2$, which has already been visited. Since there is nowhere left to go, $P$ must finish at vertex $2k-1 = o_{k-2}$.

Finally, consider incoming vertex $i_k = 2k-1$. Consider a Hamiltonian path $P$ that begins at this vertex and finishes at an outgoing vertex. After $P$ begins at vertex $2k-1$ it is forced to visit vertex $2k-2$. At this point, vertex $1$ must be visited as it will not be possible to reach $1$ otherwise. Then for each subsequent vertex, the same argument can be made: upon visiting vertex $j$ we must visit vertex $j+1$ or else it will be impossible to return later. Hence all remaining vertices are visited, with $P$ finishing at vertex $2k-3 = o_k$.

In each case, we have shown that any Hamiltonian path of $\mathcal{S}_k$ which starts at $i_j$ does not end at $o_m$ if $j \neq m$, completing the proof.
\end{proof}

\begin{proposition}For any $k \geq 4$, there is a Hamiltonian path in $\mathcal{S}_k$ between each pair of vertices $i_j$ and $o_j$ for $j = 1, \hdots, k$.\label{prop-paths}\end{proposition}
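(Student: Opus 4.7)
The plan is to exhibit an explicit Hamiltonian path from $i_j$ to $o_j$ for every $j$, mirroring the case-split structure already used in Theorem~\ref{thm-paired}. Because $\mathcal{S}_k$ has the repetitive block pattern depicted in Figure~\ref{fig2}---consecutive "zigzag" blocks on the undirected edges $(4i-2,4i-1),(4i-1,4i),(4i,4i+1)$ stitched together by the directed skip edges $(4i-2,4i+5)$ and $(4i+1,4i+2)$---it suffices to describe how each candidate path traverses a generic block and then handle the first and last blocks separately using the special closing edges. I will present the even case $k \geq 4$ in detail; the odd case $k \geq 5$ is handled by the same strategy with the asymmetric closing edges $(2k-4,1)$ and $(2k-2,5)$ replacing $(2k-6,2k-1)$, $(2k-3,2k-2)$, $(2k-2,1)$, $(2k-1,2k-2)$.

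For the even case I would treat the same five categories as in Theorem~\ref{thm-paired}: $i_1$, the $i_{2j}$ for $1 \le j \le (k-4)/2$, the $i_{2j+1}$ for $1 \le j \le (k-2)/2$, $i_{k-2}$, and $i_k$. For each category I would write down a path of the following shape. Start at $i_j$; use a skip edge $(4i-2,4i+5)$ or $(4i+1,4i+2)$ to jump forward to a distant block; traverse all subsequent blocks in the "long" direction, i.e.\ entering a block at its $4i-2$ end, walking the zigzag $4i-2 \to 4i-1 \to 4i \to 4i+1$, and leaving via the skip edge $(4i+1,4i+2)$; use the special edges at the "end" of the graph to wrap back to the beginning; then traverse the earlier blocks in the opposite, "short" direction to pick up the vertices skipped over on the way out; and terminate at $o_j$. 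For example, the path from $i_1=1$ to $o_1=3$ begins $1 \to 2 \to 9 \to 10 \to 11 \to 12 \to 13 \to \cdots$, walks all later blocks, reaches $2k-1$, uses $(2k-1,2k-2)$ and then $(2k-2,5)$, and finally walks backwards along $5\to 4\to 3$; while the path from $i_k = 2k-1$ to $o_k = 2k-3$ is forced to be $2k-1 \to 2k-2 \to 1 \to 2 \to 3 \to 4 \to \cdots \to 2k-3$, exactly as observed in the last paragraph of Theorem~\ref{thm-paired}.

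Having produced a candidate path for each category, the verification splits into three routine checks: (i) every edge used in the candidate path is present in the construction (just a lookup against the bulleted edge list of $\mathcal{S}_k$); (ii) the sequence of vertices has length $2k-1$; and (iii) no vertex is repeated. Checks (i) and (iii) follow from the fact that the only edges leaving the middle of a block are the skip edges, so the forward pass visits each block's $4i-2$ and $4i+1$ endpoints exactly once, and the return pass visits the interior vertices $4i-1,4i$ via the undirected zigzag edges in reverse. Check (ii) is a direct index count per category. To save space, I would present one generic case in full detail and describe the others as a parametric shift of indices, then note explicitly how the boundary vertices $1,\,2k-3,\,2k-2,\,2k-1$ and the closing edges are absorbed.

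The main obstacle is purely bookkeeping: because the special closing edges in the even and odd constructions differ, and because the outgoing-vertex assignments $o_{2j}=4j+3$, $o_{2j+1}=4j-3$ wrap indices around the ends of the graph, a slip of $\pm 1$ or $\pm 2$ in an index would produce a sequence that either omits a boundary vertex or tries to use a non-existent edge. I would guard against this by verifying each candidate path on the smallest admissible instance---$\mathcal{S}_4, \mathcal{S}_5, \mathcal{S}_6$---before promoting it to the parametric statement for general $k$, and by making sure each Hamiltonian path uses precisely one of the skip edges $(4i-2,4i+5)$ per block traversed in the "long" direction, so that the vertex count is automatic.
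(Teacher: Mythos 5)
Your overall strategy is exactly the paper's: the published proof is nothing more than an explicit list of Hamiltonian paths, one for each of the same categories of incoming vertex used in Theorem~\ref{thm-paired}. However, the one nontrivial path you actually write down is wrong, and the error is structural rather than a $\pm 1$ slip. Your path from $i_1$, namely $1 \to 2 \to 9 \to 10 \to 11 \to 12 \to 13 \to \cdots \to 2k-1 \to 2k-2 \to 5 \to 4 \to 3$, never visits vertices $6,7,8$: the jump $2 \to 9$ skips over the entire block $\{6,7,8,9\}$ except its endpoint $9$, and your return leg $2k-2 \to 5 \to 4 \to 3$ only recovers $3,4,5$. There is no way to patch this, since the only neighbours of $8$ are $7$ and $9$, so a detour $5 \to 6 \to 7 \to 8$ dead-ends with $3$ and $4$ unvisited. (For $k=6$ the path also uses the non-edge $(10,11)$.)

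The underlying confusion is in your description of the ``long'' traversal. You have the path enter a block at $4i-2$, walk forward to $4i+1$, and leave via $(4i+1,4i+2)$ --- but that is just the monotone walk $1 \to 2 \to 3 \to \cdots$ used on the \emph{return} leg, and it consumes no skip edges at all. The correct forward pass does the opposite: it enters each block at its \emph{high} end $4i+1$ (arriving on the skip edge $(4(i-1)-2,\,4(i-1)+5) = (4i-6,\,4i+1)$), walks the block \emph{backwards} $4i+1 \to 4i \to 4i-1 \to 4i-2$, and departs on the skip edge $(4i-2, 4i+5)$. Thus the correct path from $i_1$ is $1 \to 2 \to 9 \to 8 \to 7 \to 6 \to 13 \to 12 \to 11 \to 10 \to \cdots \to 2k-1 \to 2k-2 \to 5 \to 4 \to 3$, and the other categories are analogous shifts of this pattern with a ``travel in vertex order'' segment appended after wrapping through $2k-2 \to 1$. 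Your own safeguard --- checking $\mathcal{S}_4$ and $\mathcal{S}_6$ before generalizing --- would have caught this, but as written the proposal does not contain a valid set of paths, and since the entire content of this proposition is those paths, the proof is incomplete.
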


\begin{proof}It suffices to provide the paths. First, for the case where $k$ is even:

From $i_1$: Starting from 1, go to 2. Then repeat the path $4m+1$ to $4m$ to $4m-1$ to $4m-2$ for $m = 2, \hdots, \frac{k-2}{2}$, followed by $2k-1$ to $2k-2$ to $5$ to $4$ to $3$.

From $i_{2j}$ for $j = 1, 2, \hdots, \frac{k-4}{2}$: Starting from $4j-1$, go to $4j$ to $4j+1$ to $4j+2$. Then repeat the path $4m+1$ to $4m$ to $4m-1$ to $4m-2$ for $m = j+2, j+3, \hdots, \frac{k-2}{2}$, followed by $2k-1$ to $2k-2$ to $1$. Then travel in vertex order along $2, 3, \hdots, 4j-2$. Finally, go to $4j+5$ to $4j+4$ to $4j+3$.

From $i_{2j+1}$ for $j = 1, 2, \hdots, \frac{k-2}{2}$: Starting from $4j+1$, go to $4j$ to $4j-1$ to $4j-2$. Then repeat the path $4m+1$ to $4m$ to $4m-1$ to $4m-2$ for $m = j+1, j+2, \hdots, \frac{k-2}{2}$, followed by $2k-1$ to $2k-2$ to $1$. Then travel in vertex order along $2, 3, \hdots, 4j-3$.

From $i_{k-2}$: Starting from $2k-5$, go to $2k-4$ to $2k-3$ to $2k-2$ to $1$. Then travel in vertex order along $2, 3, \hdots, 2k-6$ and finally go to $2k-1$.

From $i_k$: Starting from $2k-1$, go to $2k-2$ to $1$. Then travel in vertex order along $2, 3, \hdots, 2k-3$.

Next, for the case where $k$ is odd:

From $i_1$: Simply travel in vertex order along $1, 2, \hdots, 2k-1$.

From $i_{2j}$ for $j = 1, 2, \hdots, \frac{k-3}{2}$: Starting from $4j-1$, go to $4j$ to $4j+1$ to $4j+2$. Then repeat the path $4m+1$ to $4m$ to $4m-1$ to $4m-2$ for $m = j+2, j+3, \hdots, \frac{k-1}{2}$, followed by 1. Then travel in vertex order along $2, 3, \hdots, 4j-2$. Finally, go to $4j+5$ to $4j+4$ to $4j+3$.

From $i_{2j+1}$ for $j = 1, 2, \hdots, \frac{k-1}{2}$: Starting from $4j+1$, go to $4j$ to $4j-1$ to $4j-2$. Then repeat the path $4m+1$ to $4m$ to $4m-1$ to $4m-2$ for $m = j+1, j+2, \hdots, \frac{k-1}{2}$, followed by 1. Then travel in vertex order along $2, 3, \hdots, 4j-3$.

From $i_{k-1}$: Starting from $2k-3$, go to $2k-4$ to $1$ to $2$. Then repeat the path $4m+1$ to $4m$ to $4m-1$ to $m-2$ for $m = 2, \hdots, \frac{k-3}{2}$. Finally, go to $2k-1$ to $2k-2$ to $5$ to $4$ to $3$.\end{proof}

\begin{theorem}For any $k \geq 4$, the graph $\mathcal{S}_k$ is a $k$-in-out graph of optimal size.\end{theorem}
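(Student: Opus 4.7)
The plan is to assemble the theorem directly from the results already established in the manuscript, since almost all of the work has been done. I would begin by observing that $\mathcal{S}_k$ was constructed explicitly to have exactly $2k-1$ vertices, and so by Proposition \ref{prop-order} its order is already optimal; no additional argument is needed on that front.

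Next I would verify that $\mathcal{S}_k$ meets Definition \ref{def-ios}. For the single visit condition, I would check (by inspecting the edge list given in the construction) that $\mathcal{S}_k$ is bipartite with parts $V_1$ and $V_2$ chosen so that $|V_1|=|V_2|+1=k$ and every $i_j$ and every $o_m$ lies in $V_1$. Concretely, one takes $V_1$ to be the odd-indexed vertices and $V_2$ the even-indexed vertices, and then simply reads off the construction to see each listed edge joins an odd vertex to an even vertex. Once this is checked, Lemma \ref{lem-bip} immediately yields the single visit condition.

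For the paired vertices condition, the two halves are exactly Theorem \ref{thm-paired} (no Hamiltonian path between $i_j$ and $o_m$ for $j \neq m$) and Proposition \ref{prop-paths} (a Hamiltonian path between $i_j$ and $o_j$ for every $j$), and I would simply invoke them. Combined with the order count, this gives that $\mathcal{S}_k$ is a $k$-in-out graph of minimum order.

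The only nonroutine step is the bipartiteness check for the single visit condition, since the construction has several ad hoc edges (such as $(2k-6,2k-1)$, $(2k-2,5)$, $(2k-4,1)$) that must each be confirmed to go between odd and even vertices; but these parities are immediate. I do not anticipate any real obstacle, and the proof is essentially a one-paragraph assembly of the preceding lemmas, propositions, and theorem.
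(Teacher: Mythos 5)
Your proposal is correct and matches the paper's own proof, which likewise just assembles Lemma \ref{lem-bip} (via the bipartiteness of $\mathcal{S}_k$ with the odd-labelled vertices forming the larger part containing all incoming and outgoing vertices), Theorem \ref{thm-paired}, Proposition \ref{prop-paths}, and Proposition \ref{prop-order}. The only difference is that you spell out the parity check for bipartiteness, which the paper dismisses as easily verified just before Theorem \ref{thm-paired}.
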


\begin{proof}From Lemma \ref{lem-bip} it is clear that $\mathcal{S}_k$ satisfies the single visit condition, while from Theorem \ref{thm-paired} and Proposition \ref{prop-paths} it is clear that $\mathcal{S}_k$ satisfies the paired vertices condition. Hence from Definition \ref{def-ios} it follows that $\mathcal{S}_k$ is a $k$-in-out graph. Then, from Proposition \ref{prop-order} we can see that $\mathcal{S}_k$ is of minimal order, completing the proof.\end{proof}

Next, we consider the number of edges in our construction. For even $k$, $\mathcal{S}_k$ contains $4k-4$ directed edges, and for odd $k$, $\mathcal{S}_k$ contains $4k-3$ directed edges. Recall from Theorem \ref{thm-edges} that any $k$-in-out graph of order $2k-1$ must contain at least $4k-4$ edges. Hence, for even $k$ the construction is also optimal with respect to the number of edges. For odd $k$, the construction is this manuscript does not quite meet the bound provided by Theorem \ref{thm-edges}. However, so far no examples of $k$-in-out graphs with $4k-4$ edges have been found for any odd $k$, which leads to the following conjecture.

\begin{conjecture}Any $k$-in-out graph of order $2k+1$ for odd $k$ has at least $4k-3$ edges.\end{conjecture}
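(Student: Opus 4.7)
The plan is to mirror the two-case analysis of Theorem~\ref{thm-edges} for the slightly larger order $2k+1$. Writing $V(S) = I \cup O \cup B \cup N$ with sizes $a$, $a$, $b$, $c$, the equations $a+b=k$ and $2a+b+c=2k+1$ now yield $c = b+1$, which reverses the imbalance of the order-$2k-1$ situation. In the case $a \geq 1$, the edge-containment rules in Theorem~\ref{thm-edges} rest only on minimality and the local segment structure, so they transfer verbatim; the same degree count then gives $|E(S)| \geq 2(a+c) + 2(k-1) = 2(a+b+1)+2(k-1) = 4k$, comfortably above $4k-3$. The only nontrivial case is therefore $a = 0$, in which $b = k$, $c = k+1$, and every $i_j$ and every $o_j$ lies in $B$.

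In this case, the key structural observation is that $S$ can no longer be bipartite between $B$ and $N$. A count of the four transition types along any Hamiltonian path $P$ between two vertices of $B$ that covers $k$ vertices of $B$ and $k+1$ of $N$ yields the identity
\begin{equation*}
\#NN_P \;-\; \#BB_P \;=\; 2,
\end{equation*}
so every such path contains at least two $N$-to-$N$ transitions. If $S$ is nevertheless bipartite (no $BB$ or $NN$ edges), then the Theorem~\ref{thm-edges} degree bounds at $N$-vertices give $|E(S)| \geq 4c = 4(k+1) > 4k-3$ and we are done; otherwise, at least one $NN$ edge must exist and a more delicate count is required.

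For the non-bipartite subcase I would retain the bounds $\deg^+(x), \deg^-(x) \geq 2$ at each $x \in N$ (still valid, by the argument of Theorem~\ref{thm-edges}, whenever $x$ has at least one $B$-neighbour; the exceptional $N$-vertices with only $N$-neighbours would have to be handled separately, typically by showing they locally force additional edges). Observing that $NN$-edges are exactly the edges counted twice in the joint in-degree and out-degree sums at $N$ gives
\begin{equation*}
|E(S)| \;\geq\; 4c \,+\, BB_S \,-\, NN_S \;=\; 4(k+1) \,+\, BB_S \,-\, NN_S,
\end{equation*}
where $BB_S$ and $NN_S$ denote the numbers of $BB$- and $NN$-edges in $S$. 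Summing the per-path identity over all $k$ Hamiltonian paths yields $\sum_{e \in NN_S} f(e) - \sum_{e \in BB_S} f(e) = 2k$, with $f(e)$ the number of Hamiltonian paths through $e$; together with $f(e) \geq 1$ for every edge (by minimality) and a matching upper bound on $f(e)$ for $NN$-edges coming from the cap $\#NN_P = \#BB_P + 2$ in each individual path, this should produce $NN_S - BB_S \leq 7$ and hence $|E(S)| \geq 4k-3$.

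The main obstacle will be this final double-counting in the $a=0$ non-bipartite subcase: converting the path-weighted identity into a raw bound on $NN_S - BB_S$, and controlling the pathological $N$-vertices with no $B$-neighbour. The odd parity of $k$ is expected to enter through the observation that $2k \equiv 2 \pmod 4$ for odd $k$, which rules out a perfectly symmetric distribution of transition usages across the $k$ paths and forces the discrepancy to fall within a constant additive window that keeps $|E(S)|$ above $4k-3$.
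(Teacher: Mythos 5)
First, a point of orientation: the paper offers \emph{no proof} of this statement --- it is posed as an open conjecture, motivated only by the authors' failure to find odd-$k$ examples with $4k-4$ edges. So there is no paper proof to compare your attempt against; it must stand on its own. Note also that the stated order $2k+1$ is almost certainly a typo for $2k-1$: the surrounding discussion concerns graphs of the minimal order $2k-1$, at which the paper's odd-$k$ construction has exactly $4k-3$ edges. You took the statement literally, which turns the relation $c = b-1$ used in Theorem~\ref{thm-edges} into $c = b+1$; under the intended reading your entire setup would change again. Either way, the proposal as written has genuine gaps.

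In the case $a \geq 1$, the edge-containment rules of Theorem~\ref{thm-edges} do \emph{not} transfer verbatim. Those rules are consequences of the rigid segment structure (blocks of the form $I \cdots I \, B \, O \cdots O$ separated by single $N$-vertices), which is forced by pigeonhole precisely because $c = b-1$: the $b-1$ gaps between consecutive $B$-vertices must each absorb one of the only $b-1$ vertices of $N$. With $c = b+1$ there are two spare $N$-vertices, so a path-labelling may contain $N$-to-$N$ transitions, $N$-vertices wedged between $I$-vertices, and $I$-to-$O$ transitions; in particular it is no longer true that every vertex of $I \cup N$ precedes a vertex of $I \cup B$, nor that the two edge sets in the final count are disjoint. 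The claimed bound of $4k$ in this case is therefore unsupported without new arguments.

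In the case $a = 0$, your transition identity $\#NN_P - \#BB_P = 2$ is correct (and a nice observation --- it shows bipartiteness is impossible, so your bipartite subcase is vacuous), but the concluding double count runs in the wrong direction. From the identity $\sum f(e)$ over $NN$-edges minus $\sum f(e)$ over $BB$-edges equals $2k$, the hypotheses $f(e) \geq 1$ everywhere together with an \emph{upper} bound $f(e) \leq F$ on $NN$-edges yield only $NN_S \geq (2k + BB_S)/F$, a \emph{lower} bound on $NN_S$; what your inequality $|E| \geq 4c + BB_S - NN_S$ actually requires is the \emph{upper} bound $NN_S - BB_S \leq 7$, which from this identity would instead need a strong \emph{lower} bound on $f(e)$ for $NN$-edges (each $NN$-edge lying on roughly all $k$ paths) --- a structural claim you never argue. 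Nothing in the proposal excludes a graph with many $NN$-edges each lying on only one or two paths, for which your main inequality is vacuous. Two further holes: the degree bounds $\deg^{\pm}(x) \geq 2$ at $x \in N$ are justified only when $x$ has $B$-neighbours on paths (you acknowledge but defer this), and the parity of $k$ --- which must enter essentially, since under the intended reading the even-$k$ construction actually attains $4k-4$ --- appears only in a speculative closing remark and in no inequality.
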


The construction provided in this manuscript produces in-out graphs which are bipartite, and this is also the case in the constructions for $k = 1$ and $k = 2$ given earlier. We now show that, in addition to being bipartite, the in-out graphs we construct are often also planar. This is certainly the case for $k = 1$ and $k = 2$. Consider $\mathcal{S}_k$ defined as above for $k \geq 4$.

\begin{proposition}$\mathcal{S}_k$ is planar unless $k = 1 \mod 4$.\label{prop-planar}\end{proposition}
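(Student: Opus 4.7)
The plan is to exhibit a planar drawing of $\mathcal{S}_k$ in each case $k \not\equiv 1 \pmod{4}$ by constructing a two-page book embedding. That is, I would fix a Hamiltonian path of $\mathcal{S}_k$---one is always supplied by Proposition~\ref{prop-paths}---as the spine, laid out as a horizontal line with vertices in path order, and partition the remaining edges (the chords) into an ``upper'' class and a ``lower'' class so that no two chords of the same class cross. Two chords $\{a,b\}$ and $\{c,d\}$ (with $a<b$ and $c<d$ in the induced linear ordering) cross precisely when $a<c<b<d$ or $c<a<d<b$, so the required 2-colouring exists exactly when the associated chord-conflict graph is bipartite.

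For $k$ even, I would use the Hamiltonian path from $i_1 = 1$ to $o_1 = 3$ given in Proposition~\ref{prop-paths}. Translating the remaining edges into the vertex-position coordinates induced by this path, the chords arise from the ``skipped'' backbone edges of the form $\{4i+1, 4i+2\}$ together with a small bounded number of extra chords coming from the two endpoint regions. A direct enumeration of pairwise crossings shows that the chord-conflict graph is a tree---essentially a path with one or two extra leaves---and therefore trivially bipartite, yielding the 2-page planar embedding.

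For $k \equiv 3 \pmod{4}$, I would instead use the Hamiltonian path from $i_3 = 5$ to $o_3 = 1$ given in Proposition~\ref{prop-paths}. Along this spine the chord set again follows a regular pattern, and the resulting chord-conflict graph turns out to be a single cycle of length $(k+1)/2$. This length is even precisely when $k \equiv 3 \pmod{4}$, so the cycle is bipartite and a 2-page planar embedding results. When $k \equiv 1 \pmod{4}$ the same computation yields a cycle of odd length (at least $5$ once $k \geq 9$), which shows exactly why this argument cannot establish planarity in that regime and motivates the restriction in the statement.

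The main obstacle will be the bookkeeping: one must track carefully which original edges are consumed by the chosen Hamiltonian path and which become chords, and then enumerate all crossings between the chord pairs. This is tractable because the construction of Section~3 is built from repeating blocks of four vertices, so the chord endpoints follow a periodic arithmetic pattern shifted by $4$ with each additional block. Once a handful of small base cases are checked by direct inspection and the boundary behaviour at the two ends of the path is handled, bipartiteness of the chord-conflict graph---a tree in the even case, an even cycle of length $(k+1)/2$ in the case $k \equiv 3 \pmod{4}$---can be verified uniformly by induction on the number of blocks.
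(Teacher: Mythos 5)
Your approach is sound and genuinely different from the paper's. The paper argues directly from the drawing in Figure~\ref{fig2}: the only crossings occur between successive ``poles'' of three undirected edges, every second pole is untwisted, and the leftover parity obstruction (an even number of poles) is either repaired by relocating vertex $2k-1$ (even $k$) or shown to force $k \equiv 1 \pmod 4$ (odd $k$). You instead build a two-page book embedding along the Hamiltonian path of Proposition~\ref{prop-paths} and reduce planarity to bipartiteness of the chord-conflict graph. I checked your structural claims on concrete instances and they hold: for $k=10$, using the path from $i_1$, the six chords (in path-position coordinates $[2,19]$, $[6,17]$, $[3,10]$, $[7,14]$, $[11,16]$, $[1,16]$) have a conflict graph that is a $6$-vertex path; for $k=7$ and $k=9$, using the path from $i_3$, the conflict graphs are cycles of length $4$ and $5$ respectively, matching your formula $(k+1)/2$. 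The periodic block structure does make the general-$k$ claim a routine induction: for even $k$ the interior chords $[4i-5,4i+2]$ cross only consecutively, and the handful of boundary chords attach to the ends of that chain without creating a cycle. What your approach buys is a fully combinatorial, checkable argument that does not lean on pictures or informal operations like ``untwisting''; what it costs is exactly the bookkeeping you flag, which is asserted rather than carried out in your write-up and must be completed before this is a proof. Two further remarks. First, failure of your chosen two-page embedding for $k\equiv 1 \pmod 4$ does not prove non-planarity there (a different spine or a non-book embedding could a priori succeed), but the proposition as stated only requires the positive direction, and the paper's own claim that the crossing number is $1$ in that case is likewise not rigorously established. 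Second, your argument silently assumes $k$ is large enough for the generic chord pattern to appear; the small cases $k=4,6,7$ should be checked separately, though they are immediate.
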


\begin{proof}We will consider separately the case where $k$ is even, and the case where $k$ is odd. Suppose first $k$ is even. It is clear that, for the embedding displayed in Figure \ref{fig2}, the only edge crossings occur between successive sets of three undirected edges, we call these {\em poles}. This can be avoided by \lq\lq untwisting" every second pole by effectively turning them upside down. The only potential issue is if there is an even number of poles. In this case, vertex $2k-1$ can be relocated underneath the in-out graph to permit a planar embedding, as shown in Figure \ref{fig3}. Hence, $S$ is planar if $k$ is even.

\begin{figure}[h]\begin{center}\includegraphics[scale=0.32]{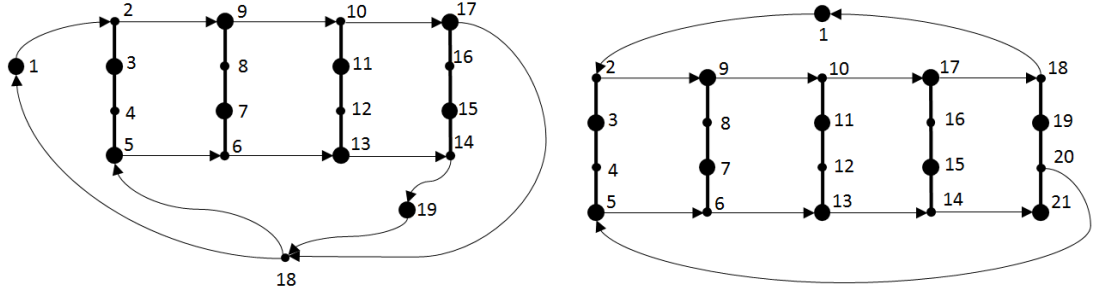}
\caption{Planar embeddings of in-out graphs for $k = 10$ and $k = 11$.}\label{fig3}\end{center}\end{figure}

Next, consider the case where $k$ is odd. Again, for the embedding displayed in Figure \ref{fig2} the only edge crossings occur between successive poles, so we can again untwist every second pole. The only issue occurs when there is an even number of poles. Each pole contains four vertices, two of which are incoming (and outgoing) vertices. Since there is an even number of poles, the number of incoming vertices contained in them is a product of four. Finally, there is one additional incoming vertex, so in this case, $k = 1 \mod 4.$ In this case, $S_k$ has a crossing number of 1; an embedding of $S_9$ with a single edge-crossing is displayed in Figure \ref{fig4}.\end{proof}

\begin{figure}[h]\begin{center}\includegraphics[scale=0.33]{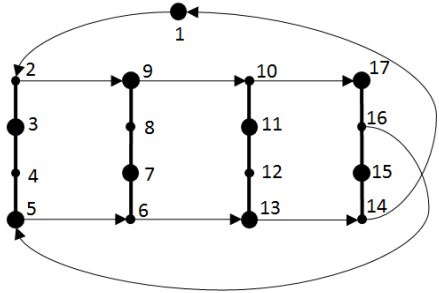}
\caption{An embedding of the in-out graph for $k = 9$ with a single edge-crossing.}\label{fig4}\end{center}\end{figure}

Obviously, if an planar graph is desired for $k = 1 \mod 4$, a $(k+1)$-in-out graph can just be constructed instead with one of the incoming/outgoing vertices treated as a neutral vertex.

Finally, we consider the remaining case when $k = 3$. Exhaustive search shows that there are no 3-in-out graphs on 5 vertices, and hence $k = 3$ is the only case where it is impossible to construct a $k$-in-out graph with $2k-1$ vertices. For $k = 3$, the minimal examples occur for six vertices, and among those, the fewest number of directed edges possible is ten. An example of one such 3-in-out graph, $\mathcal{S}_3$, can be constructed by taking the (directed) path graph $P_6$ on six vertices and adding the directed edges $(1,5)$, $(2,1)$, $(3,2)$, $(5,1)$ and $(6,4)$. Then $i_1 = 1$, $i_2 = 3$ and $i_3 = 6$, while $o_1 = 6$, $o_2 = 4$ and $o_3 = 3$. The three Hamiltonian paths between pairs of incoming and outgoing vertices are $P_1 = 1 \rightarrow 2 \rightarrow 3 \rightarrow 4 \rightarrow 5 \rightarrow 6$, $P_2 = 3 \rightarrow 2 \rightarrow 1 \rightarrow 5 \rightarrow 6 \rightarrow 4$ and $P_3 = 6 \rightarrow 4 \rightarrow 5 \rightarrow 1 \rightarrow 2 \rightarrow 3$. This 3-in-out graph is displayed in Figure \ref{fig5}. Note that the resulting graph is planar, although it is not bipartite; indeed, no 3-in-out graphs on 6 vertices are bipartite. The smallest bipartite 3-in-out graph can be obtained by taking $\mathcal{S}_4$ and then simply treating one of the incoming/outgoing vertices as a neutral vertex.

\begin{figure}[h]\begin{center}\includegraphics[scale=0.33]{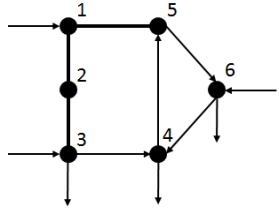}
\caption{A minimal 3-in-out graph $\mathcal{S}_3$.}\label{fig5}\end{center}\end{figure}

%
%
%
%
%

\section{Generalized Traveling Salesman Problem}

We now consider the generalized traveling salesman problem (GTSP) and show that we can convert it to an instance of {\em asymmetric TSP} (ATSP) through the use of in-out subgraphs. First, we recall the definition of the traveling salesman problem (TSP). Consider any graph $G$ and a set of weights $w_{ij}$ on every directed edge $(i,j) \in G$. Any path in $G$ has {\em path length} equal to the sums of weights of the edges used in the path. Then TSP can be defined as the problem of identifying the Hamiltonian cycle of $G$ with shortest path length. If the weights are different depending on the direction in which the edge is traversed, or if some edges can only be traversed in one direction, the problem is often called asymmetric TSP.

A specialisation of TSP is GTSP, wherein the vertices $V$ of $G$ are partitioned into disjoint groups $V_i$, such that $V$ is the union of all $V_i$. Then, GTSP is equivalent to ATSP, except the requirement to visit each vertex from $V$ is replaced by the requirement to visit exactly one vertex from each $V_i$. This variation of ATSP has been considered in various contexts, including order picking in warehouses \cite{daniels}, routing of clients through welfare agencies \cite{saskena}, and computer file sequencing \cite{henry}.

Rather than develop a specialised algorithm for solving GTSP, a common approach in literature has been to convert instances of GTSP into instances of ATSP, so as to take advantage of the wealth of excellent open-source TSP solvers available such as Concorde \cite{concorde} or LKH \cite{LKH}. To the best of the authors' knowledge, the earliest such conversion is due to Lien et al. \cite{lien}, which involved replacing each group with a special subgraph that ensured each group would be visited exactly once. For an instance of GTSP with $n$ vertices and $g$ groups, the conversion by Lien et al. results in an instance of ATSP with $3n + g + 2$ vertices.

Later in the same year, Noon and Bean \cite{noonbean} advocated an alternative approach of adding edges between the vertices in each $V_i$, so as to introduce a cycle with zero weight for each $V_i$, and then adding a large weight to all edges going between vertices in different groups. Since the large weight renders these edges undesirable, a TSP solver will seek to use as few of them as possible and hence will visit each group only once. This conversion results in an instance of ATSP with only $n$ vertices (that is, there is no growth in the order of the instance) but the price paid is the introduction of large weights on $O(n^2)$ edges. Each of these weights must be at least as large as the sum of the $n$ largest weights in the original instance, so the magnitude of the weights grows with the size of the instance. Noon and Bean point out in their manuscript that while the large weights pose no theoretical issues, they create practical difficulties with solving. They indicate that methods such as subtour elimination algorithms will require many branches before the the first non-zero bounds (ie bounds which include any of the edges of large weight) are reached. Instead, they advocate cutting-plane approaches, but indicate that the large weights will pose numerical stability problems for any LP solvers and would inhibit variable elimination.

A few years later, Dimitrijevi\'{c} and \v{S}ari\'{c} \cite{dimitrijevic} proposed a conversion which results in $2n$ vertices, but only introduces $n$ large weights. Then, a few years after that, Behzad and Modarres \cite{behzad} developed another conversion of GTSP to ATSP which, from an algorithmic perspective, performs equivalently to the conversion by Noon and Bean and hence contains $n^2$ large weights as well.

Here, we propose a new alternative. Through the use of in-out subgraphs, we can convert GTSP to ATSP, where the single visit condition will ensure the groups are only visited once, and the paired vertices condition can be used to ensure the appropriate weights are given to each outgoing edge. Since the structure of in-out graphs allows us to satisfy the requirements of GTSP, we are able to avoid the need to use large weights. The following procedure will construct an instance of ATSP from any given instance of GTSP.

\begin{enumerate}\item For each group $V_i$ in the original instance, the new instance should contain a $k$-in-out subgraph $\mathcal{S}_k^i$, where $k = |V_i|$. The weight on each of the edges of $\mathcal{S}_k^i$ should be 0.
\item For every directed edge $(u,v)$ with weight $w_{uv}$ in the original instance, do the following. If $u$ is the $s$-th vertex in $V_i$ and $v$ is the $r$-th vertex in $V_j$, then add an edge to the new instance between outgoing vertex $o_s$ of $\mathcal{S}_k^i$ and incoming vertex $i_r$ of $\mathcal{S}_k^j$ with weight $w_{uv}$.
\end{enumerate}

If the original instance has $n$ vertices, partitioned into $g$ groups, and with $m$ groups having cardinality 3, the order of the resultant instance will be $2n - g + m$. This is obviously superior to the conversion due to Lien et al. It is also superior to the conversion by Dimitrijevi\'{c} both in terms of size and also by avoiding the introduction of large weights. Since $g < n$ for any meaningful instance of GTSP, the conversion given here results in a larger instance than that from Noon and Bean. However, by avoiding introducing large weights, our conversion is considerably more numerically stable. We can also partially alleviate the burden of the larger size by taking advantage of constraints for each in-out subgraph as described in the next section.

\section{Constraints for in-out subgraphs}

The use of in-out subgraphs allows us to pose constrained forms of HCP or TSP as standard forms. However, a solver for one of these problems may not be \lq\lq aware" that the in-out subgraph satisfies the single visit condition or the paired vertices condition, and could waste time trying to eliminate possibilities which are already prevented by the in-out subgraphs. Hence, whenever possible, such as for cutting-plane approaches, it is beneficial to include constraints which instruct the solver about the in-out subgraphs. We conclude this manuscript with some such constraints.

In each of the following constraints, it is assumed that $S$ is any $k$-in-out subgraph with incoming vertices $i_j$ and outgoing vertices $o_j$ for $j = 1, 2, \hdots, k$, and that $x_{ij}$ is the variable corresponding to using (directed) edge $(i,j)$ in the tour.

\begin{eqnarray}
\sum_{v \not\in S}\sum_{j=1}^k  x_{v,i_j} & = & 1,\label{eq-in}\\
\sum_{v \not\in S}\sum_{j=1}^k  x_{o_j,v} & = & 1,\label{eq-out}\\
\sum_{v \not\in S} \left( x_{v,i_j} - x_{o_j,v} \right) & = & 0, \quad \forall j = 1, \hdots, k.\label{eq-pair}
\end{eqnarray}

Constraint (\ref{eq-in}) ensures exactly one incoming edge is used, and constraint (\ref{eq-out}) ensures exactly one outgoing edge is used. Constraints (\ref{eq-pair}) ensure that an incoming edge incident with incoming vertex $i_j$ is used if and only if an outgoing edge incident with outgoing vertex $o_j$ is also used.

We can add further constraints if we consider the paths between pairs of incoming and outgoing vertices. For the construction $\mathcal{S}_k$ given in this manuscript, there is a unique Hamiltonian path $P_j$ in $\mathcal{S}_k$ between incoming vertex $i_j$ and outgoing vertex $o_j$ for each $j = 1, 2, \hdots, k$. Hence, we can also add the following constraints, using the shorthand that $x_e = x_{i,j}$ if $e = (i,j)$:

\begin{eqnarray}
(2k-2 + \delta_{3k})\sum_{v \not\in \mathcal{S}_k}x_{v,i_j} - \sum_{e \in P_j} x_e & \leq & 0, \quad \forall j = 1, \hdots, k,\label{eq-rightedges}\\
x_e - \sum_{v \not\in \mathcal{S}_k}\sum_{j | e \in P_j} x_{v,i_j} & = & 0, \quad \forall e \in \mathcal{S}_k.\label{eq-appropriateedges}
\end{eqnarray}

Constraints (\ref{eq-rightedges}) ensure that if an incoming edge incident with incoming vertex $i_j$ is used, then every one of the edges in path $P_j$ must also be visited. Note that $\delta_{3k}$ is the Kronecker delta that is equal to one if $k = 3$ and is zero otherwise; this term is necessary because the Hamiltonian path in $S_3$ contains 5 edges rather than the normal $2k-2$ edges for all other $S_k$. Constraints (\ref{eq-appropriateedges}) ensure that an edge $e$ in $\mathcal{S}_k$ is used if and only if an incoming edge incident with an incoming vertex $i_j$ is used such that $P_j$ contains $e$.

\bibliographystyle{plain}   

\end{document}